\providecommand{\U}[1]{\protect\rule{.1in}{.1in}}
\newtheorem{theorem}{Theorem}
\newtheorem{lemma}[theorem]{Lemma}
\DeclareMathOperator\Imaginary{Im}
\DeclareMathOperator\Real{Re}
\newenvironment{proof}[1][Proof]{\noindent\textbf{#1.} }{\ \rule{0.5em}{0.5em}}
\title{Bounds of sub-triangles}
\author[1]{Amalia Adlerteg\thanks{Corresponding author.}}
\author[1]{Linus Carlsson}
\affil[1]{%
M\"{a}lardalen University, Sweden}
\begin{document}

\maketitle

\begin{abstract}
We show upper and lower bounds for angles in iterations of trisections of
certain triangulations.

\end{abstract}

\section{Introduction}
Mesh refinement has been of interest for several decades, see
e.g.~\citet{bansch1991local}, \citet{perdomo2013proving}, \citet{korotov2021improved}. To estimate the numerical solutions
in, for example, the finite element method, there is a need to understand how
the angles in each mesh-element can vary.
Lower and upper bounds have been established for the longest edge bisection method, see e.g.~\citet{gutierrez2007complexity}.
We will study the behaviour of the angles when using the \emph{longest edge trisection}(LE3),
which is an iterative method explained in the next section. We assumethat a triangle has the angles $\alpha$, $\beta$, and $\gamma$,
from smallest to largest.
When performing the LE3, we create a family of new triangles which
is denoted $\Gamma$.

The results in this paper is an extract of some of the results in Amalia Adlerteg's master thesis, to be presented later this year. We present the following findings:
\begin{itemize}
\item In the general case, we numerically and partially prove that
$$\gamma^{\prime}\leq\gamma\,\frac{\arccos\left(\frac{-5}{2\sqrt{7}}\right)}{\pi/3},$$ where $\gamma^{\prime}$ is the largest angle in any
triangle $T\in\Gamma$, see Theorem~\ref{thm:Omega1} and Theorem~\ref{thm:Omega2}.
\item Starting with $\alpha=\beta=\gamma=\pi/3$ it follows that any angle
$v\in T\in\Gamma$ satisfies\footnote{The lower estimate is given in \citep{perdomo2013proving}.} 
$$\arctan{\left(\frac{\sqrt{3}}{11}\right)}\leq v\leq\arccos{\left(\frac{-5}{2\sqrt{7}}\right)},$$ see Theorem~\ref{thm:Omega1} and Theorem~\ref{thm:Omega2}.
\item Starting with $\alpha=\beta=\pi/4$ and $\gamma=\pi/2$ it follows that
any angle \\
$v\in T\in\Gamma$ satisfies\footnote{The lower estimate is given in \citep{perdomo2013proving}.} 
$$\frac{3}{4}\,\arctan{\left(\frac{\sqrt{3}}{11}\right)}\leq v\leq\frac{3}{2}\,\arccos{\left(\frac{-5}{2\sqrt{7}}\right)},$$ see Theorem~\ref{thm:Omega1} and Theorem~\ref{thm:Omega2}.
\end{itemize}

\section{Description and preliminary results of the LE3-method}

\subsection{Longest edge trisection}
On the longest edge of any triangle there exists two points that divide the edge into three equal parts.
If we draw two lines from these points to the opposing vertex we get the longest edge trisection of this triangle.
This can be repeated on our new triangles until we are satisfied with the mesh refinement.

\subsubsection{Mathematical understanding}
We can describe any triangle as a complex number, $z$,
on the positive real and imaginary axes by letting the longest edge lay from the origin to the point $1+0i$ and let the shortest edge be the vector given by $z$.
In this paper we will refer to this description of any triangle as the normalized form of it.
This gives us a closed space where these triangles are defined,
namely the space of triangles given by $\Sigma = \{ z \ | \ \Imaginary(z) > 0,\ \Real(z)\leq 1/2,\ |z-1|\leq 1\} $.
We define $W_\textrm{L}$ as the conformal mapping that transforms the left triangle obtained by LE3 into its normalized form.
The conformal mappings $W_\textrm{M}$ and $W_\textrm{R}$ are defined similarly for the middle and right triangle respectively.
From a starting triangle defined by complex $z$, the orbit of that starting point,
$\Gamma_z$, contains the normalized triangles obtained from LE3 of all subsequent triangles.
There are three points in $\Sigma$ whose orbits consists of finite number of distinct points.
They are $\omega_1=\frac{1}{3}+\frac{\sqrt{2}}{3}i$,
$\omega_2=\frac{1}{3}+\frac{\sqrt{2}}{6}i$ and 
$\omega_3=\frac{4}{9}+\frac{\sqrt{2}}{9}i$ which all have the orbit $\Gamma_\omega=\{\omega_1,\omega_2,\omega_3\}$. 

\subsubsection{Conformal mappings}
To normalize a triangle we can use Möbius functions since they preserve angles.
The Möbius transformation that normalizes a triangle, defined by complex $z$,
is dependent on where $z$ is located on $\Sigma$.
For example, if $|z-2/3|\leq 1/3$ then the longest edge of the right triangle is located differently than if $z$ was outside of that area.
Thus, $W_\textrm{R}$ is a piecewise function defined on two areas and, with similar reasoning, both $W_\textrm{L}$ and $W_\textrm{M}$ are also piecewise functions. 

If $f(z)=\frac{az+b}{cz+d}$, then $f$ can be derived from the composition of these simple transformations:
\begin{itemize}
    \item translation : $f(z)=z+c$, $c$ is a complex constant.
    \item rotation : $f(z)=\lambda z$, $\lambda$ is a complex constant such that $|\lambda|=1$.
    \item magnification : $f(z)=\rho z$, $\rho$ is a real strictly positive number.
    \item inversion : $f(z)=1/z$.
\end{itemize}
\begin{figure}
    \centering
    \includegraphics[width=0.5\linewidth]{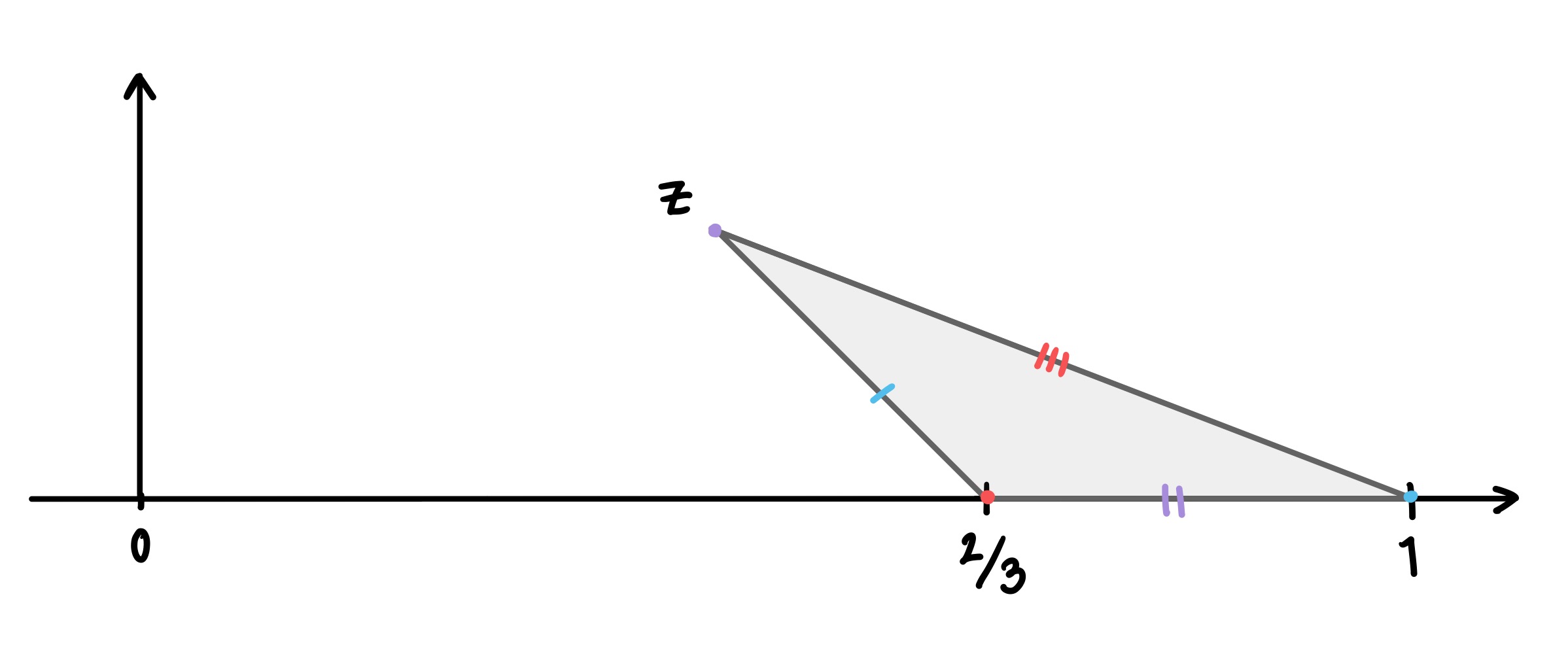} 
    \caption{A right triangle after trisection of the original triangle defined by $z$, such that $|z-2/3|\leq1/3$.}
    \label{fig:tr1}
\end{figure}
Say that $|z-2/3|\leq1/3$ and we want to normalize the right triangle illustrated in Figure~\ref{fig:tr1}. 
We can start with defining a translation that moves the point $z$ to the origin, that is 
\begin{equation}\label{eq:translation}
    f_1(\zeta) = \zeta - z. 
\end{equation} 
\begin{figure}[h]
    \centering
    \includegraphics[width=0.5\linewidth]{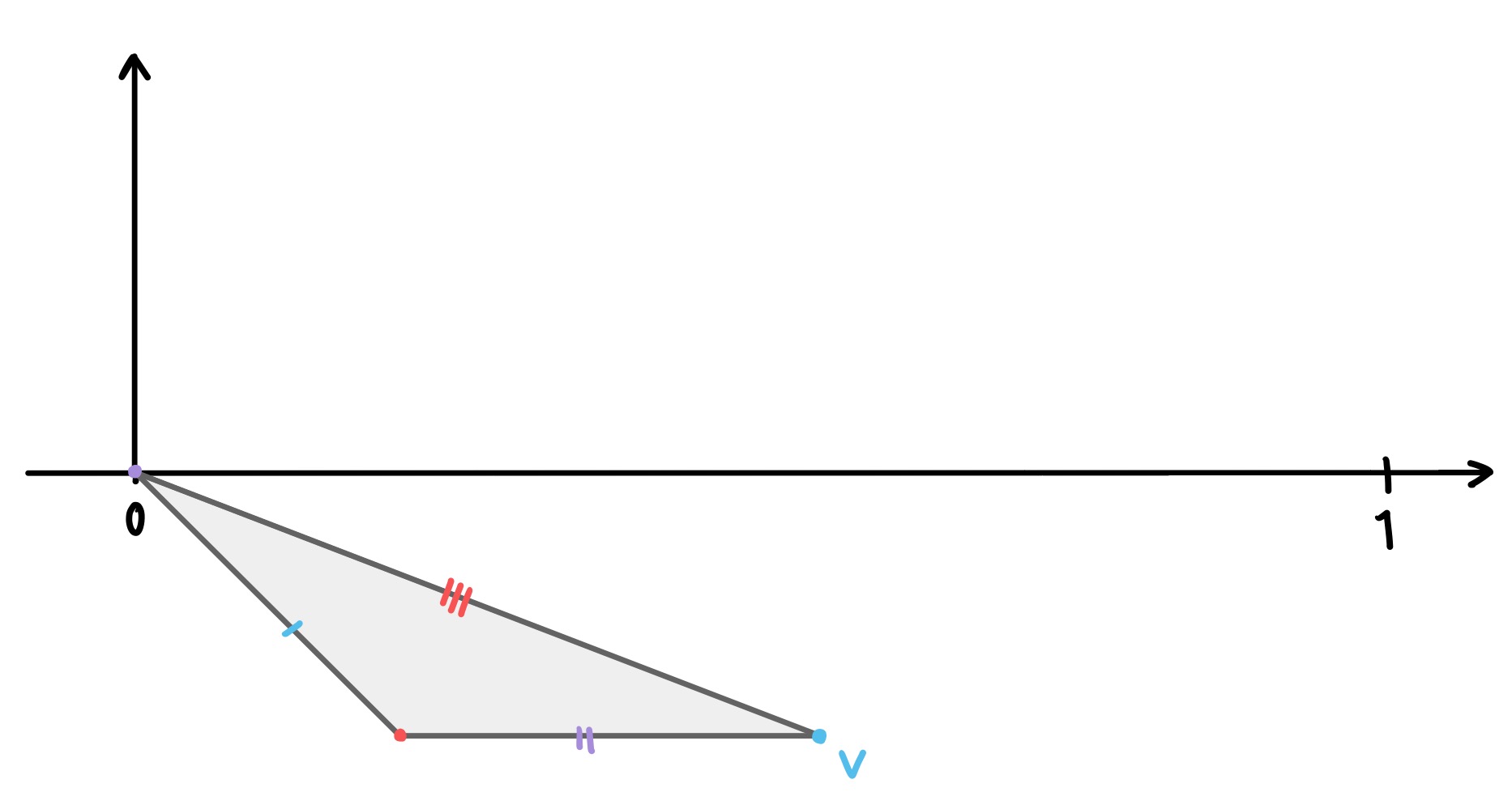} 
    \caption{The triangle after the translation given by Equation~\eqref{eq:translation}.}
    \label{fig:tr2}
\end{figure}
Then we define a rotation that moves the point $v$, illustrated in Figure~\ref{fig:tr2}, to the real axis.
The resulting triangle is displayed in Figure~\ref{fig:tr3}. 
The rotation is given by 
\begin{equation}\label{eq:rotation}
    f_2(\zeta)=\zeta \cdot\frac{\overline{v}}{|v|} = \zeta \cdot\frac{\overline{f_1(1+0i)}}{|f_1(1+0i)|}=\zeta\cdot\frac{1-\overline{z}}{|1-z|}.
\end{equation}
\begin{figure}[h]
    \centering
    \includegraphics[width=0.5\linewidth]{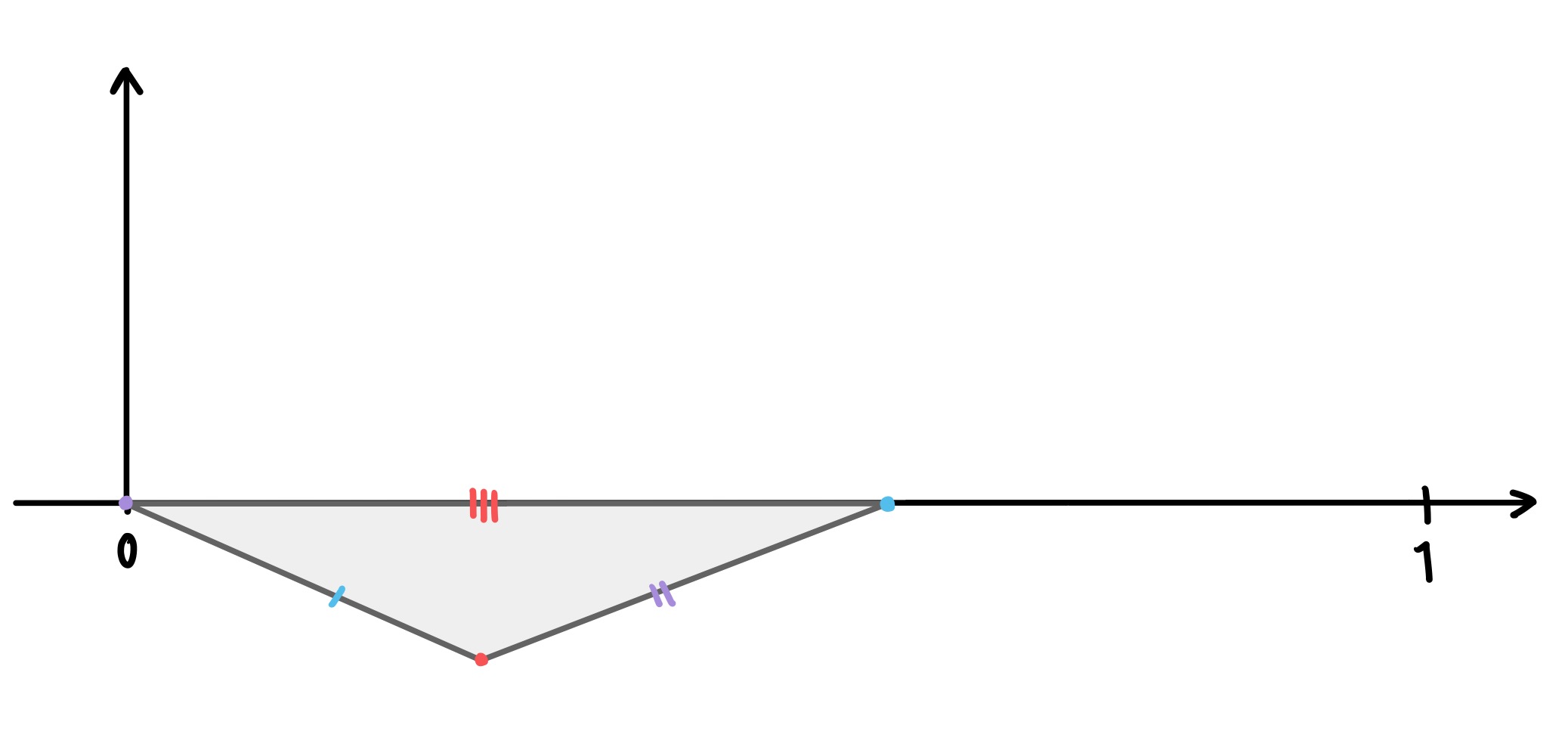} 
    \caption{The triangle after the rotation given by Equation~\eqref{eq:rotation}.}
    \label{fig:tr3}
\end{figure}
Now we define an inversion that mirrors all points on the real axis,
the result of which is illustrated in Figure~\ref{fig:tr4}. 
The inversion is given by taking the conjugate, that is
\begin{equation}\label{eq:inversion}
    f_3(\zeta) = \overline{\zeta}.
\end{equation}
\begin{figure}[h]
    \centering
    \includegraphics[width=0.5\linewidth]{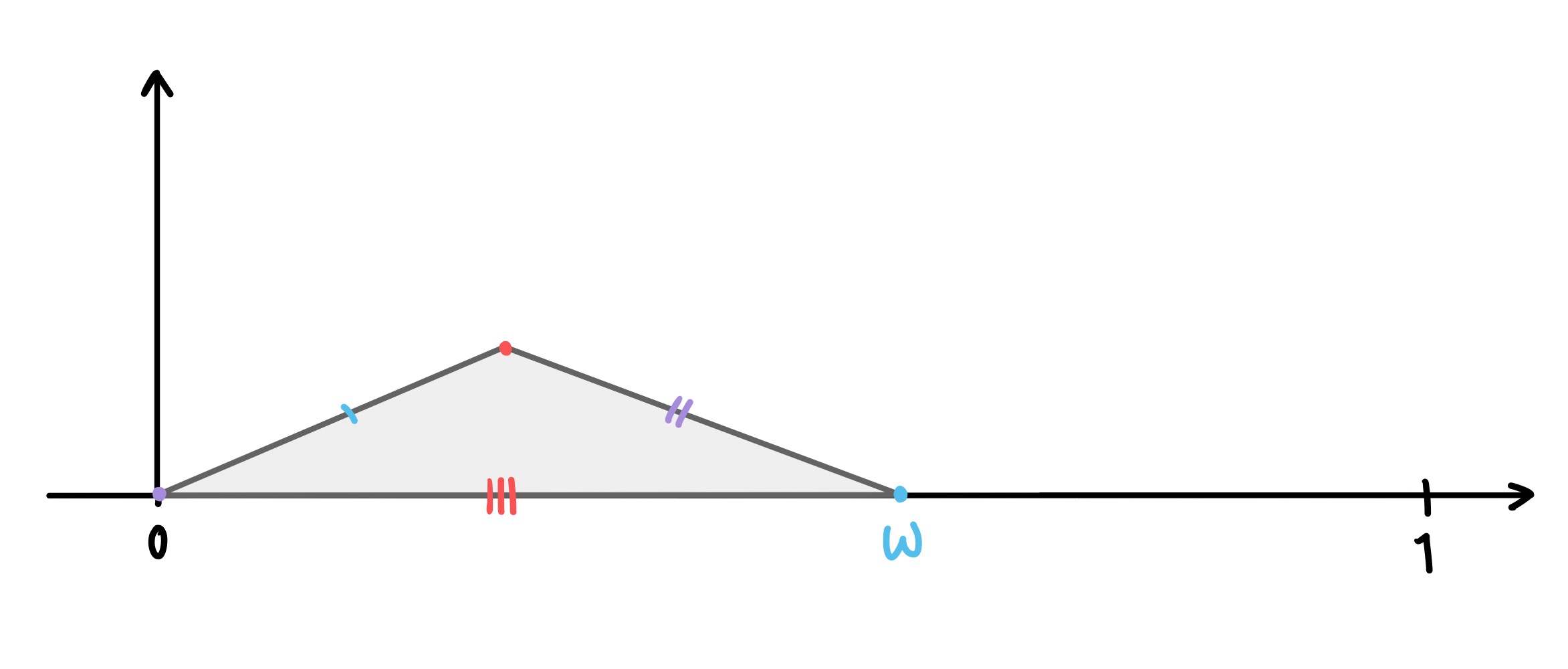} 
    \caption{The triangle after the inversion given by Equation~\eqref{eq:inversion}.}
    \label{fig:tr4}
\end{figure}
Finally we define a magnification that enlarges the triangle so that $w$, displayed in Figure~\ref{fig:tr4}, is at $1+0i$, that is
\begin{equation}\label{eq:magnification}
    f_4(\zeta)=\frac{\zeta}{|w|}= \frac{\zeta}{|f_3\circ f_2\circ f_1(1+0i)|}=\frac{\zeta}{|1-z|}.
\end{equation}
\begin{figure}[h]
    \centering
    \includegraphics[width=0.5\linewidth]{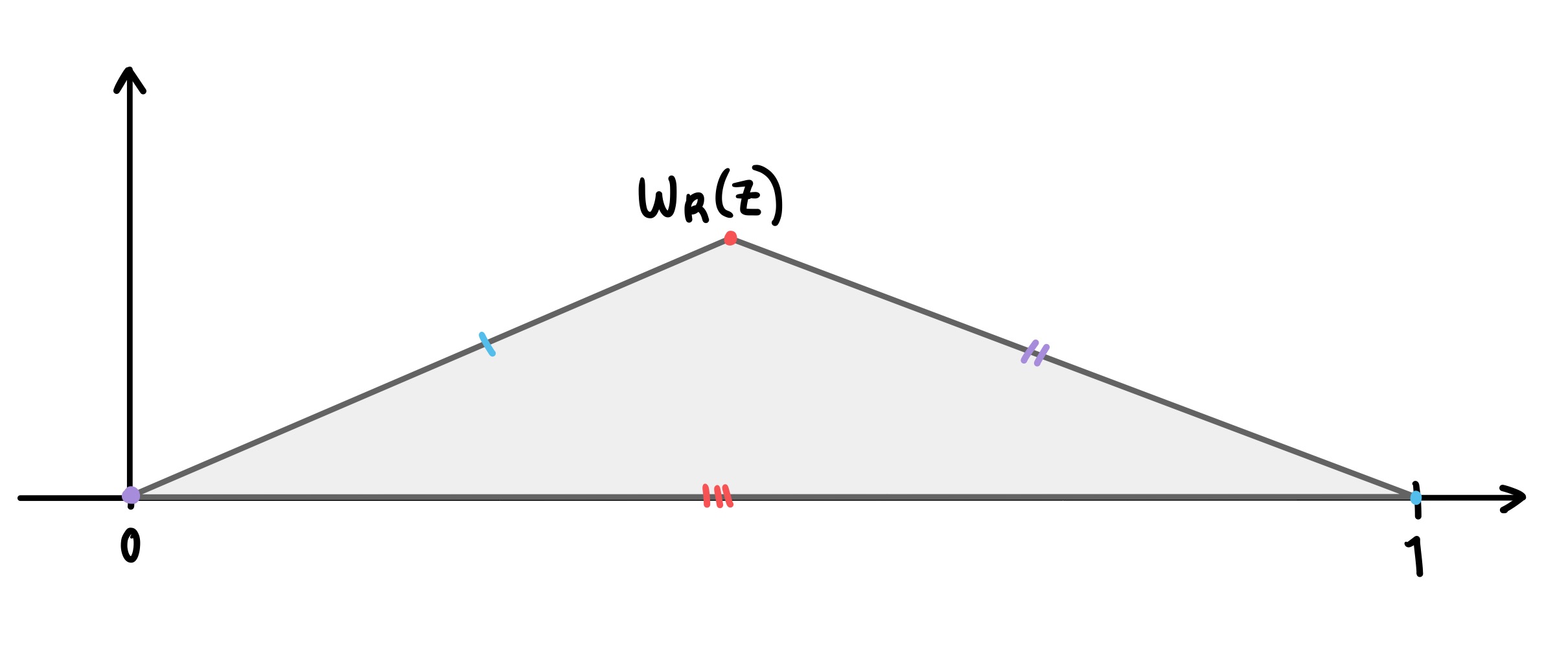} 
    \caption{The triangle after the magnification given by Equation~\eqref{eq:magnification}.}
    \label{fig:tr5}
\end{figure}
The final triangle after our simple transformations is illustrated in Figure~\ref{fig:tr5}. Now the complex number that describes the normalized triangle is originally at $2/3+0i$, so
$$W_\textrm{R}(z)=f_4\circ f_3\circ f_2\circ f_1\left(\frac{2}{3}+0i\right)=\frac{3\overline{z}-2}{3\overline{z}-3}.$$
For the other transformations see \citet{perdomo2013proving}.

\subsection{Poincar\'{e} half plane metric}
The metric of the Poincaré half-plane model on the half-plane $\{x+iy \ |\ y>0\}$ is given by
\begin{equation*}\label{Poincare metric}
    (ds)^2=\frac{(dx)^2+(dy)^2}{y^2},
\end{equation*}
where $s$ measures the local length along a line. The length of a curve is therefore given by
\begin{equation*}
    l(\gamma)=\int_\gamma ds.
\end{equation*}
With the parametrisation $\gamma:[0,1]\to \mathbb{C}$ such that $\gamma(t)=x(t)+iy(t)$, we get that
\begin{equation*}
    \frac{ds}{dt}= \frac{1}{dt}\,\sqrt{\frac{(dx)^2+(dy)^2}{y^2}}=\frac{1}{y}\,\sqrt{\frac{(dx)^2+(dy)^2}{(dt)^2}}=\frac{\sqrt{(\frac{dx}{dt})^2+(\frac{dy}{dt})^2}}{y}.
\end{equation*}
With this parametrisation the length of the curve is 
\begin{equation*}
    l(\gamma)=\int_\gamma ds = \int_0^1 \frac{ds}{dt}\,dt=\int_0^1 \frac{\sqrt{(\frac{dx}{dt})^2+(\frac{dy}{dt})^2}}{y}\,dt.
\end{equation*}
The hyperbolic distance, $d$, between $z_1$ and $z_2$ in the Poincaré half-plane is given by 
\begin{equation}\label{eq:hyperbolic distance}
    \cosh(d)= 1+\frac{|z_1-z_2|^2}{2\cdot\Imaginary(z_1)\cdot\Imaginary(z_2)}.
\end{equation}
In this space a geodesic is either a vertical line if the two points have the same real value, otherwise it is a circle arc that is centred on the real axis. 

Our derivations and simulations in this paper will rely heavily on the following Lemma, proved and presented as \emph{Lemma~3} in \citet{perdomo2013proving}. 
\begin{lemma}[Non-increasing property]\label{lm:non increasing}
    Let $d(\cdot,\cdot)$ denote the hyperbolic distance in the Poincaré half-plane. For every $z_1$ and $z_2$ in the space of triangles, $\Sigma$, we have
    $$d(W_j(z_1),W_j(z_2))\leq d(z_1,z_2), \hspace{2mm} j\in\{\textrm{L, M, R}\}.$$
\end{lemma}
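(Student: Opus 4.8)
The plan is to exploit the fact that each $W_j$ is (piecewise) a Möbius transformation of $\mathbb{C}$ composed with at most one conjugation, and that such maps are exactly the isometries (orientation-preserving or -reversing) of the Poincaré half-plane — provided they map the upper half-plane onto itself. So the first step is to recall the decomposition of each $W_j$ into the four elementary building blocks (translation, rotation, magnification, inversion) together with conjugation, as carried out in the excerpt for $W_{\mathrm R}$ and in \citet{perdomo2013proving} for the others. Translations by real constants, positive magnifications, and conjugation across the imaginary axis are isometries of the half-plane; the troublesome pieces are the rotations and the inversion $\zeta\mapsto 1/\zeta$, which in general do not preserve $\{\Imaginary(\zeta)>0\}$.

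The key observation that makes the argument work is that we only need the inequality for $z_1,z_2\in\Sigma$, and the intermediate images stay inside (the closure of) the upper half-plane because the elementary maps are applied in a fixed order that is geometrically meaningful: at each stage the object being transformed is an honest triangle sitting in $\overline{\mathbb{H}}$. Concretely, I would argue that the composition $f_4\circ f_3\circ f_2\circ f_1$ appearing in $W_{\mathrm R}$ (and its analogues) can be rewritten as a real Möbius transformation $\zeta\mapsto\frac{a\zeta+b}{c\zeta+d}$ with $a,b,c,d\in\mathbb{R}$ and $ad-bc<0$, precomposed with $\zeta\mapsto\overline\zeta$; equivalently $W_j(z)=g_j(\overline z)$ where $g_j\in\mathrm{PSL}_2(\mathbb{R})$. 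Since $z\mapsto\overline z$ is an isometry of $\mathbb{H}$ and every element of $\mathrm{PSL}_2(\mathbb{R})$ is an isometry of $\mathbb{H}$, the composite $W_j$ is an isometry, hence $d(W_j(z_1),W_j(z_2))=d(z_1,z_2)$ on the region where a single branch of $W_j$ applies.

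The remaining step, and the one I expect to be the main obstacle, is handling the \emph{piecewise} nature of $W_{\mathrm L},W_{\mathrm M},W_{\mathrm R}$. Each is defined by one Möbius branch on one subregion of $\Sigma$ and by another branch on the complementary subregion; the two branches agree on the common boundary arc (they must, by continuity of the geometric construction). If $z_1$ and $z_2$ lie in the same branch, the previous paragraph gives equality. If they lie in different branches, I would connect them by the hyperbolic geodesic segment from $z_1$ to $z_2$, let $z^\ast$ be a point where this segment crosses the dividing arc, and use the triangle inequality together with branchwise isometry on the two sub-segments:
\begin{equation*}
d(W_j(z_1),W_j(z_2))\le d(W_j(z_1),W_j(z^\ast))+d(W_j(z^\ast),W_j(z_2))=d(z_1,z^\ast)+d(z^\ast,z_2)=d(z_1,z_2),
\end{equation*}
the last equality because $z^\ast$ lies on the geodesic between $z_1$ and $z_2$. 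This is exactly where the inequality (rather than equality) in the statement comes from. The technical points to verify carefully are: that $\Sigma$ is geodesically convex so the connecting geodesic stays in $\Sigma$ (it is an intersection of half-planes and a hyperbolic disk, hence convex in the hyperbolic sense); that the dividing curve is itself a geodesic arc or at least that the geodesic from $z_1$ to $z_2$ meets it, so a crossing point $z^\ast$ exists; and that $W_j$ is continuous across that arc so the two one-sided values of $W_j(z^\ast)$ coincide. Granting these, the lemma follows. Since the excerpt attributes the result to \citet{perdomo2013proving}, I would additionally just cite that source for the detailed verification of the branch computations.
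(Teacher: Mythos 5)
The paper itself does not prove this lemma --- it is quoted verbatim from \emph{Lemma~3} of \citet{perdomo2013proving} --- so there is no in-paper argument to measure you against; judged on its own, your reconstruction is essentially the right one and is sound in outline. The core claim checks out: each branch of $W_\textrm{L},W_\textrm{M},W_\textrm{R}$ is of the form $z\mapsto\frac{a\overline{z}+b}{c\overline{z}+d}$ with $a,b,c,d\in\mathbb{R}$ (e.g.\ $W_\textrm{R}(z)=\frac{3\overline{z}-2}{3\overline{z}-3}$ with $ad-bc=-3<0$), and such maps are exactly the orientation-reversing isometries of the half-plane, so on each branch one even has equality of distances. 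One small correction to your phrasing: a map with $ad-bc<0$ is \emph{not} ``an element of $\mathrm{PSL}_2(\mathbb{R})$ precomposed with conjugation'' --- a positive-determinant real M\"obius map applied to $\overline{z}$ sends the upper half-plane to the lower one; the correct statement is that the anti-M\"obius maps with real coefficients and \emph{negative} determinant are the orientation-reversing isometries, which is what the explicit formulas give. Your flagged technical points are also easier than you fear: the branch-dividing curves are sets such as $|z-2/3|=1/3$, $|z-1/3|=1/3$ or vertical lines (equal-length loci of edges with real endpoints), i.e.\ circles centred on the real axis or vertical rays, hence geodesics, so each branch domain is hyperbolically convex; likewise $\Sigma$ is the intersection of three hyperbolic half-planes (note $|z-1|\leq 1$ cut with $\Imaginary(z)>0$ is a half-plane bounded by a geodesic, not a disk), hence convex, so the geodesic from $z_1$ to $z_2$ stays in $\Sigma$ and meets the dividing geodesic in a point $z^\ast$; and at such a point two edges of the sub-triangle tie for longest (or $\Real=1/2$ for the flip), in which case the two normalizations produce the same point of $\Sigma$, giving the continuity you need. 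With those observations your triangle-inequality chain completes the proof, and it is, as far as one can tell, the same isometry-plus-piecewise argument that underlies the cited source.
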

We end this section by defining three hyperbolic circles, that is $C_1$, $C_2$ and $C_3$, with radius $\ln(\sqrt{2})$ and with hyperbolic centres $\omega_1$, $\omega_2$ and $\omega_3$, respectively.

\section{Simulations of orbits}
In this section we construct a fast method for orbit approximation, where one orbit is of special interest. 
\begin{lemma}\label{Lemma:orbit}
    The orbit of $z_\textrm{eq}=1/2+i\sqrt{3}/2$ is
    \begin{align*}
    \Gamma_{\textrm{eq}}=\mathcal{C}\bigcup \left\{ 
    \begin{array}{l}
        \frac{1}{2}+\frac{\sqrt{3}}{2}i,
        \frac{1}{14}+\frac{3\sqrt{3}}{14}i,
        \frac{1}{6}+\frac{\sqrt{3}}{6}i,
        \frac{5}{26}+\frac{3\sqrt{3}}{26}i,
        \frac{13}{42}+\frac{3\sqrt{3}}{14}i,\\
        \frac{5}{14}+\frac{3\sqrt{3}}{14}i,
        \frac{7}{18}+\frac{3\sqrt{3}}{18}i,
        \frac{29}{62}+\frac{3\sqrt{3}}{62}i,
        \frac{1}{2}+\frac{\sqrt{3}}{18}i,
        \frac{1}{2}+\frac{\sqrt{3}}{6}i
    \end{array} 
    \right\},
    \end{align*}
where $\mathcal{C}$ lies inside the union of circles $C_1,\,C_2$ and $C_3$.
\end{lemma}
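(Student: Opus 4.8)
The plan is to compute the orbit $\Gamma_{\textrm{eq}}$ explicitly by iterating the three conformal maps $W_{\textrm{L}}$, $W_{\textrm{M}}$, $W_{\textrm{R}}$ starting from $z_{\textrm{eq}}=1/2+i\sqrt{3}/2$, and then to prove that this process can be truncated after finitely many steps because all sufficiently deep iterates land inside $C_1\cup C_2\cup C_3$. First I would set up the explicit piecewise formulas for $W_{\textrm{L}}$, $W_{\textrm{M}}$, $W_{\textrm{R}}$ (the formula for one branch of $W_{\textrm{R}}$ is derived in the excerpt; the others are taken from \citet{perdomo2013proving}), together with the region tests (e.g.\ $|z-2/3|\le 1/3$, $|z|\le 1/3$, etc.) that decide which branch applies. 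Because these are Möbius transformations with rational coefficients and the starting point has coordinates in $\mathbb{Q}(\sqrt 3)$, every iterate stays in $\mathbb{Q}+\mathbb{Q}\sqrt 3$, so all computations are exact; I would organize the iteration as a breadth-first expansion of the ternary tree of images, recording each new normalized triangle.

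The second ingredient is the contraction mechanism supplied by Lemma~\ref{lm:non increasing}. The three exceptional points $\omega_1,\omega_2,\omega_3$ form a single orbit $\{\omega_1,\omega_2,\omega_3\}$, and $C_1,C_2,C_3$ are the closed hyperbolic disks of radius $\ln\sqrt2$ about them. I would verify that this union of disks is \emph{forward invariant}: for each $j\in\{\textrm{L},\textrm{M},\textrm{R}\}$ and each $\omega_k$, the image $W_j(\omega_k)$ is one of the $\omega$'s, and by the non-increasing property any point within hyperbolic distance $\ln\sqrt2$ of $\omega_k$ maps to within distance $\ln\sqrt2$ of $W_j(\omega_k)$, hence into the corresponding $C_\ell$. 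Thus once an iterate enters $C_1\cup C_2\cup C_3$ it never leaves; call $\mathcal{C}$ the set of all such iterates. It then remains to enumerate the finitely many iterates that have \emph{not yet} entered the union of disks: I would run the breadth-first expansion, at each newly generated point test membership in $C_1\cup C_2\cup C_3$ via the distance formula \eqref{eq:hyperbolic distance}, and stop expanding a branch as soon as it enters. The claim is that the ten listed points are exactly the iterates lying outside $C_1\cup C_2\cup C_3$ (including $z_{\textrm{eq}}$ itself), and every child of such a point is either again in this list or already inside a disk.

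The main obstacle I anticipate is proving \emph{termination} rigorously — i.e.\ that the breadth-first search really does close up after these ten points, rather than producing an ever-growing frontier of points that skirt the disks without entering. The non-increasing property alone is not strict, so it does not by itself force points toward the $\omega$'s; I would need a quantitative strengthening along the relevant branches, for instance showing that on the compact region $\Sigma\setminus(C_1\cup C_2\cup C_3)$ minus small neighborhoods, each $W_j$ is a strict contraction by a definite factor, or else that each of the ten points' children is covered by direct computation. Concretely I would (i) list the ten points, (ii) for each compute its three images and check by exact arithmetic that each image is either in the list or satisfies $\cosh d(W_j(z),\omega_k)\le \cosh\ln\sqrt2 = 3/2$ for some $k$, and (iii) invoke forward invariance of $C_1\cup C_2\cup C_3$ to conclude that no further points outside the disks can arise. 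Steps (i)--(iii) reduce the lemma to a finite, fully explicit verification; the write-up would present the image table and the handful of distance inequalities, the only genuinely delicate point being the bookkeeping that confirms closure.
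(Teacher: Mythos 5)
Your plan is sound, but note first how it relates to the paper: the paper does not prove this lemma at all --- it simply states that the result ``can be extracted from'' \citet{perdomo2013proving}. What you propose is a self-contained verification in the spirit of that cited work: exact iteration of the piecewise M\"obius maps in $\mathbb{Q}+\mathbb{Q}\sqrt{3}\,i$, forward invariance of $C_1\cup C_2\cup C_3$ via Lemma~\ref{lm:non increasing} together with the fact that each $W_j$ maps $\{\omega_1,\omega_2,\omega_3\}$ into itself, and a finite closure check on the points lying outside the disks. The logical skeleton is correct, and your own worry about termination is already resolved by your steps (i)--(iii): once every child of each of the ten listed points is shown by exact computation to be either in the list or inside one of the disks, forward invariance makes any strict-contraction argument unnecessary. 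So this is a genuinely different (and more informative) route than the paper's, which buys a verifiable proof at the cost of carrying out the explicit image table; the paper buys brevity by outsourcing the computation to the reference.

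One concrete correction before you run the verification: $\cosh(\ln\sqrt{2})=\tfrac{1}{2}\left(\sqrt{2}+\tfrac{1}{\sqrt{2}}\right)=\tfrac{3}{2\sqrt{2}}\approx 1.0607$, not $\tfrac{3}{2}$. Since your membership test for $C_1\cup C_2\cup C_3$ is a comparison of $\cosh d(W_j(z),\omega_k)$, computed from Equation~\eqref{eq:hyperbolic distance}, against exactly this threshold, using $3/2$ (i.e.\ hyperbolic radius $\operatorname{arccosh}(3/2)\approx 0.962$ instead of $\ln\sqrt{2}\approx 0.347$) would accept points that are in fact outside the disks, so the closure check could terminate prematurely and the asserted list of exterior orbit points would not be certified. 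With the correct constant (and exact arithmetic in the field generated by $\sqrt{2}$ and $\sqrt{3}$, since the $\omega_k$ involve $\sqrt{2}$), the argument goes through as you describe.
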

The result in Lemma~\ref{Lemma:orbit} can be extracted from \citep{perdomo2013proving}.

\subsection{The method of random orbit simulation}
When calculating the orbit of a starting triangle, defined by complex $z_1$, the number of subsequent triangles grows rapidly.
This occurs since each triangle generates three new ones, 
thus already at the $10^{\textrm{th}}$ iteration there are too many points in the orbits to proceed further. 
To get a better understanding of longer orbits, we will use a random process iteration procedure.

We construct a singular orbit of the form $\Gamma_{z_1}^{1}=\{z_1,z_2,\hdots,z_N\}$ for ${z_{i+1}=W_j(z_i)}$, 
where $W_j$ is randomly chosen between $W_\textrm{L},W_\textrm{M}$ and $W_\textrm{R}$.
Since this orbit excludes many elements of the orbit for $z_1$, 
we approximate $\Gamma_{z_1}$ with $$\Gamma_{z_1}\approx \bigcup_{k=1}^{N}\Gamma_{z_1}^{k}.$$

\subsection{Examples\label{sec:examples}}
A simulation of the orbit of $z=\frac{2}{10}+\frac{1}{10}i$ is illustrated in Figure~\ref{fig:some_point}.
Here there are 1000 singular orbits and 1000 iterations.
\begin{figure}[h]
    \centering
    \includegraphics[width=0.6\linewidth]{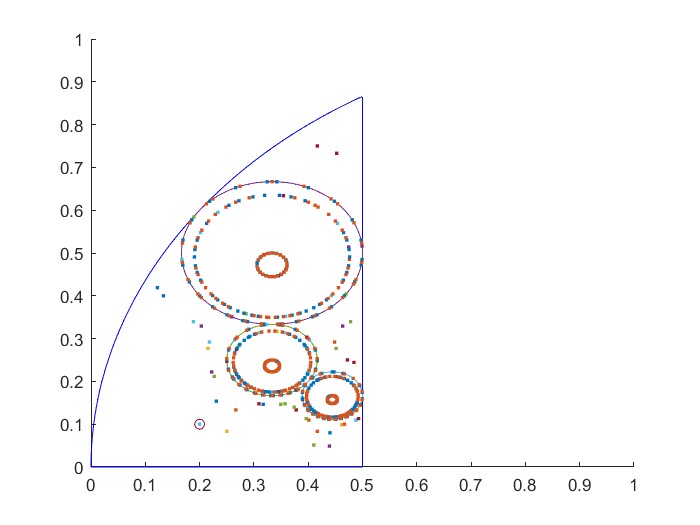}
    \caption{Simulation of the orbit starting at $z = \frac{2}{10}+\frac{1}{10}i$.}
    \label{fig:some_point}
\end{figure}
We can simulate the triangles given by complex numbers in $\Gamma_{\textrm{eq}}$ by starting at $z=\frac{1}{2}+\frac{\sqrt{3}}{2}i$, which is illustrated in Figure~\ref{fig:z_eq}.
\begin{figure}[h]
    \centering
    \includegraphics[width=0.6\linewidth]{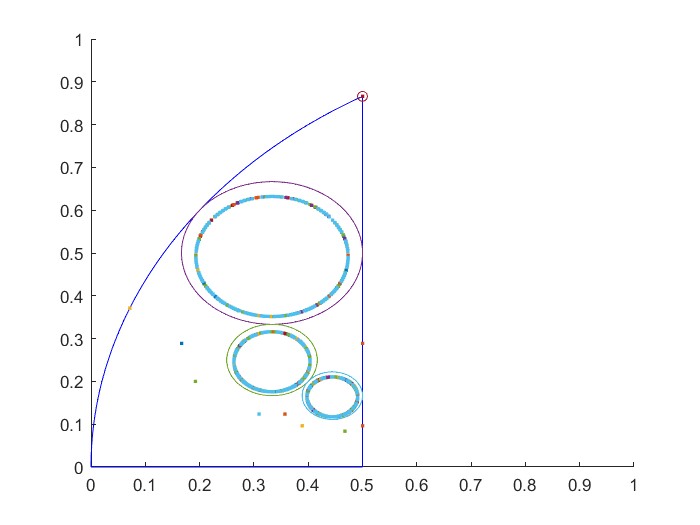}
    \caption{Simulation of the orbit starting at $z_{\textrm{eq}}= \frac{1}{2}+\frac{\sqrt{3}}{2}i$.}
    \label{fig:z_eq}
\end{figure}
In \citet{perdomo2023Similarity} they illustrate a figure, 
called \emph{Figure 10} in their work, 
of the orbit of $z=\frac{1}{10}+\frac{1}{10}i$ after ten iterations. 
With our method we can approximate these ten iterations of the orbit in 0.12 seconds, 
by simulating 10~000 singular orbits. 
The resulting orbit is displayed in Figure~\ref{fig:10_iterations} and seems to encapsulate the same pattern.
\begin{figure}[h]
    \centering
    \includegraphics[width=0.6\linewidth]{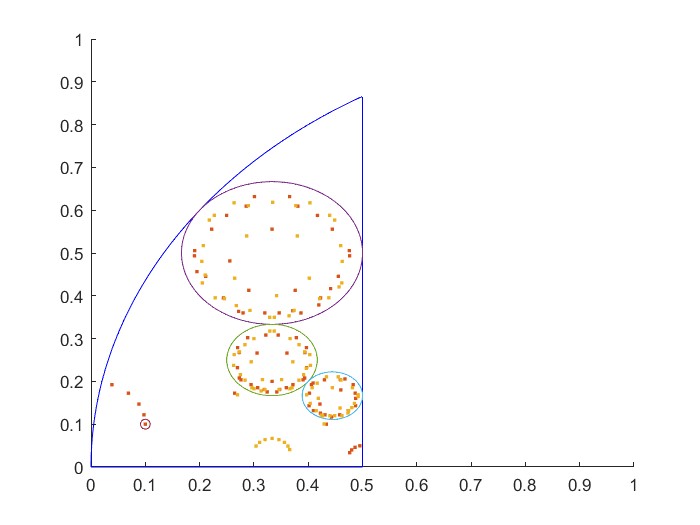}
    \caption{Simulation of the orbit starting at $z= \frac{1}{10}+\frac{1}{10}i$.}
    \label{fig:10_iterations}
\end{figure}

\section{Results}
We can divide the space of triangles, $\Sigma$, into two overlapping regions, $\Omega_1$ and $\Omega_2$. 
We trace the geodesics that tangent the circles $C_1$, $C_2$ and $C_3$ from the points in $\Gamma_{\textrm{eq}}$ that are outside of the circles.
The set $\Omega_1$ is then containing the circles $C_1$, $C_2$ and $C_3$ and the cones made up from the geodesics.
The region in $\Sigma$ under the circles $C_1,\, C_2$ and $C_3$ composes the set $\Omega_2$.
The following Lemma is proved and presented as \emph{Lemma 5} in \citet{perdomo2013proving}.
\begin{lemma}
    The set $\Omega_1$ is a closed region.
\end{lemma}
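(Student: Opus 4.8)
The plan is to realise $\Omega_1$ as a \emph{finite} union of closed sets. By construction,
$$\Omega_1=\overline{D}_1\cup\overline{D}_2\cup\overline{D}_3\cup\bigcup_{p}\bigcup_{i}K_{p,i},$$
where $\overline{D}_i$ is the closed hyperbolic disk bounded by $C_i$ (hyperbolic centre $\omega_i$, radius $\ln\sqrt2$), and $K_{p,i}$ is the cone cut out by the two geodesics drawn from $p$ tangent to $C_i$, with $p$ ranging over the points of $\Gamma_{\textrm{eq}}$ lying outside $C_1\cup C_2\cup C_3$ and $i\in\{1,2,3\}$ such that $p$ is outside $C_i$. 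The first step is to note that this really is a finite union: by Lemma~\ref{Lemma:orbit} the part $\mathcal{C}$ of $\Gamma_{\textrm{eq}}$ lies inside $\overline{D}_1\cup\overline{D}_2\cup\overline{D}_3$, so every admissible apex $p$ is among the ten explicitly listed members of $\Gamma_{\textrm{eq}}$; hence there are at most thirty cones to consider.

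Next I would check each piece is closed, in fact compact. For the disks: rewriting Equation~\eqref{eq:hyperbolic distance} with $d=\ln\sqrt2$ shows that $\overline{D}_i=\{z\ |\ \cosh d(z,\omega_i)\le \tfrac54\}$ is described by a quadratic inequality in $\Real z,\Imaginary z$, i.e. it is a Euclidean closed disk contained in the open upper half-plane, hence compact. For the cones: I would identify $K_{p,i}$ with $\bigcup_{q\in\overline{D}_i}[p,q]$, the hyperbolic convex hull of $\{p\}$ and $\overline{D}_i$, where $[p,q]$ denotes the geodesic segment; this is exactly the region bounded by the two tangent geodesics and the near arc of $C_i$. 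Since in the half-plane model the geodesic through two distinct points is a circular arc (or vertical segment) whose centre and radius are explicit continuous functions of the endpoints, the map $\overline{D}_i\times[0,1]\to\{x+iy\ |\ y>0\}$ sending $(q,t)$ to the point dividing $[p,q]$ in ratio $t$ is continuous; its image is $K_{p,i}$, which is therefore compact as a continuous image of a compact set. (Equivalently, one may cite the standard fact that the convex hull of a compact set in a Hadamard manifold is compact.) In particular $K_{p,i}$ is closed.

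Finally, $\Omega_1$ is a finite union of compact sets, hence compact, hence closed (as a subset of $\mathbb{C}$, and a fortiori of $\Sigma$). That it is a \emph{region} in the usual sense is then immediate: each $\overline{D}_i$ has nonempty interior, and each cone $K_{p,i}$ meets the circle $C_i$ along its arc of tangency points, so the pieces link together into a connected set with nonempty interior.

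The only nonroutine step is the second one, specifically making precise that a cone is closed: one must justify the continuity of the ``geodesic segment'' map on $\overline{D}_i\times[0,1]$ (or invoke the Hadamard–manifold convex–hull statement). Once Lemma~\ref{Lemma:orbit} has reduced the problem to finitely many cones, the rest is bookkeeping, and should the intended meaning of ``cone'' be the unbounded region between the two tangent geodesic rays rather than the bounded ice–cream region, it is still closed, being the intersection of the two closed hyperbolic half-planes bounded by those geodesics on the side of $\overline{D}_i$.
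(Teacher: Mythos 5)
The first thing to say is that the paper contains no proof of this lemma at all: it is imported verbatim as \emph{Lemma 5} of \citet{perdomo2013proving}, so the only meaningful comparison is with the role the lemma plays there and in the proof of Theorem~\ref{thm:Omega1}. What you have written is a proof that $\Omega_1$ is \emph{topologically} closed, indeed compact: the reduction to finitely many pieces via Lemma~\ref{Lemma:orbit}, the observation that a hyperbolic disk is a Euclidean closed disk inside the open half-plane, and the convex-hull argument for the cones are all sound for that purpose (one numerical slip: for radius $\ln\sqrt{2}$ one has $\cosh(\ln\sqrt{2})=\tfrac{3\sqrt{2}}{4}$, not $\tfrac{5}{4}$, which is $\cosh(\ln 2)$; this does not affect the structure of your argument).

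The gap is that ``closed region'' is not being used here as a purely topological statement. Look at how the lemma is invoked in Theorem~\ref{thm:Omega1}: from ``$\Omega_1$ is a closed region'' the authors conclude that for $z\in\Omega_1$ and $z'$ in the \emph{orbit} of $z$ the extremal angles over $\Omega_1$ bound $\gamma'/\gamma$. That inference requires $\Omega_1$ to be closed under the trisection maps, i.e.\ $W_j(\Omega_1)\subseteq\Omega_1$ for $j\in\{\textrm{L, M, R}\}$, so that every orbit starting in $\Omega_1$ stays in $\Omega_1$; topological closedness alone says nothing about where orbits go. This forward invariance is the substantive content supplied by the cited Lemma 5, and its proof is dynamical rather than point-set topological: it rests on the non-increasing property (Lemma~\ref{lm:non increasing}), on the fact that the maps permute the periodic points $\omega_1,\omega_2,\omega_3$ (so each closed disk of hyperbolic radius $\ln\sqrt{2}$ is carried into one of the three disks), and on the forward invariance of $\Gamma_{\textrm{eq}}$ (so the cone apexes are carried to admissible apexes or into the disks, and hence each tangent cone is carried into the union of disks and cones). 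Your proposal never uses the maps $W_\textrm{L},W_\textrm{M},W_\textrm{R}$ at all, so it proves a different, weaker statement than the one the paper needs. The natural repair is to keep your decomposition into disks and finitely many cones, but then verify invariance piece by piece along the lines just sketched.
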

\begin{theorem}\label{thm:Omega1}
    Let $z$ be in $\Omega_1$ and let $z'$ be in the orbit of $z$,
    also let $\gamma$ and $\gamma'$ be the largest angles in the triangles defined by $z$ and $z'$ respectively.
    Then there exists a constant 
    $$c=\frac{\arccos\left(\frac{-5}{2\sqrt{7}}\right)}{\pi/3}\approx2.6816$$ such that $\gamma'\leq \gamma\cdot c$.
\end{theorem}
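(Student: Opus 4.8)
The plan is to replace the multiplicative bound by an absolute one and then control the whole orbit geometrically. First observe that in every non-degenerate triangle the largest angle is at least $\pi/3$, since three positive angles summing to $\pi$ cannot all be smaller; hence $\gamma\geq\pi/3$ for $z\in\Omega_1$. Writing $\gamma_{\max}=\arccos(-5/(2\sqrt7))$, so that $c=\gamma_{\max}/(\pi/3)$, it therefore suffices to prove the absolute bound $\gamma(z')\leq\gamma_{\max}$ for every $z'$ in the orbit of every $z\in\Omega_1$: then $\gamma'\leq\gamma_{\max}=\tfrac{\pi}{3}\,c\leq\gamma\,c$.

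Next I would show that $\Omega_1$ is forward invariant, i.e.\ $W_j(\Omega_1)\subseteq\Omega_1$ for $j\in\{\mathrm{L},\mathrm{M},\mathrm{R}\}$, so that the orbit of any $z\in\Omega_1$ stays in $\Omega_1$. For the three hyperbolic disks this is immediate: since $\{\omega_1,\omega_2,\omega_3\}$ is invariant we have $W_j(\omega_i)=\omega_{i'}$ for some $i'$, and Lemma~\ref{lm:non increasing} then forces $W_j(C_i)\subseteq C_{i'}$, hence $W_j(C_1\cup C_2\cup C_3)\subseteq C_1\cup C_2\cup C_3$. For the cones one uses that each cone is attached to a point $p\in\Gamma_{\mathrm{eq}}$ lying outside the disks, that $W_j(p)$ is again a point of $\Gamma_{\mathrm{eq}}$, and that $W_j$ carries geodesics to geodesics and (being non-increasing) the disks into disks; consequently the cone at $p$ is mapped into the cone at $W_j(p)$ when that image point lies outside the disks, and into $C_1\cup C_2\cup C_3$ when it lies inside one. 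Since $\Omega_1$ is closed and only finitely many transient orbit points (and cones) occur, this is a finite verification.

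Now I would bound $\gamma$ on $\Omega_1$. For $z=x+iy\in\Sigma$ the longest edge is $[0,1]$, so $\gamma$ is the apex angle at $z$ of the triangle with vertices $0$, $1$, $z$, and $\cos\gamma=(|z|^2-x)/(|z|\,|z-1|)$. By the inscribed–angle theorem the level set $\{\gamma=\theta\}$ is an arc of a circle through $0$ and $1$, so $L:=\{z\in\Sigma:\gamma(z)>\gamma_{\max}\}$ is an open lens bounded by such an arc; the desired bound $\gamma\leq\gamma_{\max}$ on $\Omega_1$ is thus equivalent to $\Omega_1\cap L=\varnothing$. Since $\Omega_1$ is a union of finitely many hyperbolic disks (Euclidean circles) and geodesic cones (bounded by circular arcs centred on the real axis), and $\partial L$ is also a circular arc, this reduces to finitely many elementary "circle does not cross circle" checks. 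The bound is attained: at the transient point $z_\star=\tfrac{29}{62}+\tfrac{3\sqrt3}{62}i\in\Gamma_{\mathrm{eq}}\subseteq\Omega_1$ one computes $|z_\star|^2=7/31$ and $|z_\star-1|^2=9/31$, whence $\cos\gamma(z_\star)=(7/31-29/62)/(\sqrt{7/31}\cdot 3/\sqrt{31})=-5/(2\sqrt7)$, i.e.\ $\gamma(z_\star)=\gamma_{\max}$ and $z_\star\in\partial L$.

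Combining: for $z\in\Omega_1$ the orbit stays in $\Omega_1$, where $\gamma\leq\gamma_{\max}$; together with $\gamma\geq\pi/3$ this gives $\gamma'\leq\gamma_{\max}=\tfrac{\pi}{3}\,c\leq\gamma\,c$. The main obstacle is the cone part of the two middle steps — verifying, cone by cone, that $W_j$ really maps each cone into $\Omega_1$ despite $W_j$ being only piecewise Möbius (a cone may straddle the boundary between two Möbius pieces), and that no cone protrudes into the lens $L$. These are finitely many but delicate computations with circular arcs rather than one clean estimate, which is presumably why the introduction only claims to prove the general bound "partially" and verifies it numerically.
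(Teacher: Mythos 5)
Your proposal is essentially the paper's own argument: bound the largest angle below by $\pi/3$ (the paper phrases this as the minimum over $\Omega_1$ being attained at $z_{\mathrm{eq}}$), bound it above on $\Omega_1$ by $\arccos\left(\frac{-5}{2\sqrt{7}}\right)$ attained at $z_{\gamma'}=\frac{29}{62}+\frac{3\sqrt{3}}{62}i$, and use the invariance (``closedness'') of $\Omega_1$ under the maps $W_j$ so that the whole orbit stays where these bounds hold. The only difference is that you spell out --- and honestly flag as unfinished --- the finite verifications (forward invariance of the disks and cones, and non-intersection of $\Omega_1$ with the large-angle lens), which the paper handles by citing Lemma~5 of \citet{perdomo2013proving} and by simply asserting that the maximum occurs at $z_{\gamma'}$, consistent with the introduction's admission that the general bound is only partially proved.
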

\begin{proof}
    For any $z\in\Omega_1$, the minimum largest angle, $\gamma$,
    is given at $z_\textrm{eq}$, meaning that $\gamma = \pi/3$.
    The maximum angle, $\gamma'$, is given at the triangle defined by $z_{\gamma'}=\frac{29}{62}+\frac{3\sqrt{3}}{62}i$
    and with use of the Pythagorean Theorem and the Cosine Theorem we get that $$\gamma'=\arccos\left(\frac{-5}{2\sqrt{7}}\right).$$
    Since $\Omega_1$ is a closed region,
    then from the largest quotient
    $c=\frac{\arccos\left(\frac{-5}{2\sqrt{7}}\right)}{\pi/3}$ in this set it follows that $\gamma'\leq \gamma\cdot c$.
\end{proof}
\begin{theorem}\label{thm:Omega2}
    Let $z$ be in $\Omega_2$ and let $z'$ be in the orbit of $z$, also let $\gamma$ and $\gamma'$ be the largest angles in the triangles defined by $z$ and $z'$ respectively.
    Then there exists a constant $c<2.1652$ such that $\gamma'\leq \gamma\cdot c$.
\end{theorem}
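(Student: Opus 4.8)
The approach I would take follows the template of the proof of Theorem~\ref{thm:Omega1}, but first uses a simple observation that makes the statement trivial for flat starting triangles. If $\gamma(z)\ge\pi/c$ for the constant $c<2.1652$ we are after, then $\gamma'<\pi\le c\,\gamma(z)$ automatically, since every angle of a triangle is less than $\pi$; so it suffices to bound the orbit of those $z\in\Omega_2$ whose largest angle is below roughly $\pi/2.1652\approx 83^{\circ}$. For such $z$ the gain over the constant $2.6816$ of Theorem~\ref{thm:Omega1} will come from the fact that an acute-enough largest angle forces $z$ to be close, in the Poincar\'{e} metric, to the fixed orbit $\Gamma_\omega=\{\omega_1,\omega_2,\omega_3\}$, whence — by the non-increasing property — the entire orbit of $z$ stays close to $\Gamma_\omega$, a region where the largest angle cannot exceed a value $M_C$ strictly below $\arccos(-5/(2\sqrt7))$.

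The key lemma I would isolate is that $r(z):=d\big(z,\{\omega_1,\omega_2,\omega_3\}\big)$ does not increase along an orbit: if $\omega_k$ realizes $r(z)$, then $d\big(W_j(z),W_j(\omega_k)\big)\le d(z,\omega_k)=r(z)$ by Lemma~\ref{lm:non increasing}, and $W_j(\omega_k)\in\Gamma_\omega$, so $r\big(W_j(z)\big)\le r(z)$. Consequently the orbit of $z$ stays in the closed hyperbolic ball of radius $r(z)$ about $\{\omega_1,\omega_2,\omega_3\}$, and every triangle $z'$ in that orbit has largest angle at most $M\big(r(z)\big)$, where $M(\rho):=\max\{\gamma(w):d(w,\{\omega_1,\omega_2,\omega_3\})\le\rho\}$. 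When $z\in C_1\cup C_2\cup C_3$ this gives $\gamma'\le M_C:=M(\ln\sqrt2)$, a number obtained by a routine optimization: by the inscribed angle theorem the apex angle at $w$ is determined by the height $\big((\Real w-\tfrac12)^2+(\Imaginary w)^2-\tfrac14\big)/(2\,\Imaginary w)$ of the circular arc through $0,1,w$, and one minimizes that height over the three Euclidean disks of centre $\Real(\omega_k)+i\,\Imaginary(\omega_k)\cosh(\ln\sqrt2)$ and radius $\Imaginary(\omega_k)\sinh(\ln\sqrt2)$.

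The remaining inputs are geometric and explicit. I would compute $\gamma_{\min}:=\min_{z\in\overline{\Omega_2}}\gamma(z)$, which (again by the inscribed angle theorem, the apex angle strictly decreasing as $z$ moves up) is attained at the highest point of $\Omega_2$, lying on the upper boundary arc of the topmost of $C_1,C_2,C_3$; this yields an explicit $\gamma_{\min}$ strictly above $\pi/3$. I would also determine the smallest $\rho_0$ with the property that every $z\in\Omega_2$ satisfying $\gamma(z)<\pi/2.1652$ has $r(z)\le\rho_0$ — any $z\in\Omega_2$ hyperbolically far from $\Gamma_\omega$ is a sliver triangle and so has largest angle near $\pi$, which keeps $\rho_0$ only slightly larger than $\ln\sqrt2$ — and set $M^*:=M(\rho_0)$, only slightly larger than $M_C$. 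Then for any $z\in\Omega_2$: either $\gamma(z)\ge\pi/2.1652$, in which case $\gamma'/\gamma<2.1652$ trivially, or $\gamma(z)<\pi/2.1652$, in which case $\gamma'\le M^*$ and $\gamma\ge\gamma_{\min}$, so $\gamma'/\gamma\le M^*/\gamma_{\min}$. A direct numerical evaluation gives $M^*/\gamma_{\min}<2.1652$, and this is the constant $c$.

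The main obstacle is the bookkeeping in the last paragraph: identifying exactly which subset of $\Sigma$ the region $\Omega_2$ is — hence its highest point and the value $\gamma_{\min}$ — from the construction of the tangent cones that define $\Omega_1$, and making ``acute largest angle in $\Omega_2\Rightarrow$ close to $\Gamma_\omega$'' quantitative enough that the resulting $\rho_0$, and therefore $M^*/\gamma_{\min}$, lands below $2.1652$ rather than merely below $2.6816$. The tools throughout are Lemma~\ref{lm:non increasing} and the explicit orbit $\Gamma_{\textrm{eq}}$ of Lemma~\ref{Lemma:orbit}; the optimizations of the apex angle over a disk, or over a slightly larger hyperbolic ball around $\Gamma_\omega$, are the only real computation.
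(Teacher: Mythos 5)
Your first two paragraphs reproduce the paper's key mechanism (the distance to $\Gamma_\omega=\{\omega_1,\omega_2,\omega_3\}$ is non-increasing along the orbit by Lemma~\ref{lm:non increasing}, and the largest angle over the resulting sublevel region is controlled through its lowest point via inscribed-angle considerations), but the final reduction to the two decoupled constants $M^*$ and $\gamma_{\min}$ does not work, and the place where it breaks is exactly the claim you flag as the ``main obstacle''. It is false that $z\in\Omega_2$ with $\gamma(z)<\pi/2.1652\approx 83.1^\circ$ must lie within a radius $\rho_0$ only slightly larger than $\ln\sqrt2$ of $\Gamma_\omega$: hyperbolic distance from the $\omega_k$ grows not only by moving down (slivers with $\gamma$ near $\pi$) but also by moving left toward the boundary arc $|z-1|=1$, where the triangles are needle-like, i.e.\ have one \emph{small} angle while the largest angle stays moderate. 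Concretely, the point $z=1-\cos t+i\sin t$ with $t=\pi-2\pi/2.1652\approx0.24$ lies on $|z-1|=1$ in $\overline{\Omega_2}$, is an isosceles triangle with angles $\approx 83.1^\circ,\,83.1^\circ,\,13.7^\circ$ (so it belongs to your Case~2), yet its hyperbolic distance to the nearest centre ($\omega_1$) is $\approx 1.09$, roughly three times $\ln\sqrt2\approx0.347$. Hence $\rho_0\gtrsim1.09$, the ball of that radius about $\omega_3$ dips down to height $\frac{\sqrt2}{9}e^{-1.09}\approx0.053$, giving $M^*\gtrsim 2.93$ rad, while $\gamma_{\min}=(\pi-\arcsin(3/5))/2\approx1.249$; so $M^*/\gamma_{\min}\approx2.35>2.1652$, and your asserted final numerical evaluation fails. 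There is essentially no slack available: even with $\rho_0=\ln\sqrt2$ one gets $M(\ln\sqrt2)/\gamma_{\min}\approx2.70/1.249\approx2.161$, and the theorem's constant $2.1652$ is attained (up to the fourth decimal) already at the tangency point $t=\arcsin(3/5)$, so any uniform decoupling of numerator and denominator over $\Omega_2$ overshoots.

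The paper keeps the two quantities coupled along each level curve: for $z$ on $l_r$ (hyperbolic distance $r$ from $\Gamma_\omega$) it bounds $\gamma'$ by $\hat\gamma(r)$, the angle at $\tfrac12+\tfrac{\sqrt2}{9}e^{-r}i$, and bounds $\gamma$ from below by the angle at the point $l_r\cap\{|z-1|=1\}$, i.e.\ the minimum of $\gamma$ \emph{on that same level curve}, then maximizes the ratio $\hat\gamma(t)/\gamma(t)$ numerically over $0<t\le\arcsin(3/5)$. Your point with $r\approx1.09$ is harmless in that scheme because its own $\gamma$ is already $\approx1.45$, so its pointwise ratio is $\approx2.93/1.45\approx2.02$; it only becomes fatal when you pair the worst numerator with the global worst denominator. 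To repair your argument you would have to replace the single pair $(M^*,\gamma_{\min})$ by a one-parameter family indexed by $r$ (or by the position on $|z-1|=1$), which is precisely the paper's proof; note also that even then the conclusion is obtained numerically from a plot, not analytically.
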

\begin{proof}
    For any $z$ in $\Omega_2$,
    we can find a radius $r>\ln(\sqrt{2})$ such that $z$ is on the line $l_r$ that is composed of the circle arcs in $\Omega_2$ with radius $r$ and centres in $\omega_1$,
    $\omega_2$ and $\omega_3$, see \citet{perdomo2013proving} for illustrations.
    For any $z'$ in the orbit of $z$, the distance from the points $\omega_1$,
    $\omega_2$ and $\omega_3$ to $z'$ is less than or equal to $r$, by Lemma~\ref{lm:non increasing}.
    Thus, for any $z$ the maximum quotient $\gamma'/\gamma$ would be obtained when $z_{\gamma'}$ and $z_\gamma$ lay on the curve $l_r$.
    Let $z_\gamma$ be the point on $l_r$ such that $|z_\gamma-1|=1$,
    which is clearly the point that acquires the smallest angle $\gamma$ on $l_r$.
    Also let $z_{\hat{\gamma}}=\frac{1}{2}+\frac{\sqrt{2}}{9}e^{-r}i$,
    that is $\Imaginary(z_{\hat{\gamma}}) = \min_{z\in l_r}(\Imaginary(z))$ and $\Real(z_{\hat{\gamma}})=1/2$.
    
    Since $\gamma'<\hat{\gamma}$, we can use this upper bound for $\gamma'$ instead.
    Now if we parametrize $z(t)$ along $|z-1|=1$ where $0<t\leq \arcsin(3/5)$,
    we can walk along all complex numbers $z\in\Omega_2$ that are on the arc $|z-1|=1$.
    Then we calculate $r$ using Equation~\eqref{eq:hyperbolic distance},
    where $d=d(z_\gamma,\omega_1)$.
    Finally, we can plot the quotient $\hat{\gamma}(t)/\gamma(t)$ for $0<t\leq \arcsin(3/5)$,
    which is displayed in Figure~\ref{fig:gamma kvot}.
    \begin{figure}[h]
        \centering
        \includegraphics[width=0.6\linewidth]{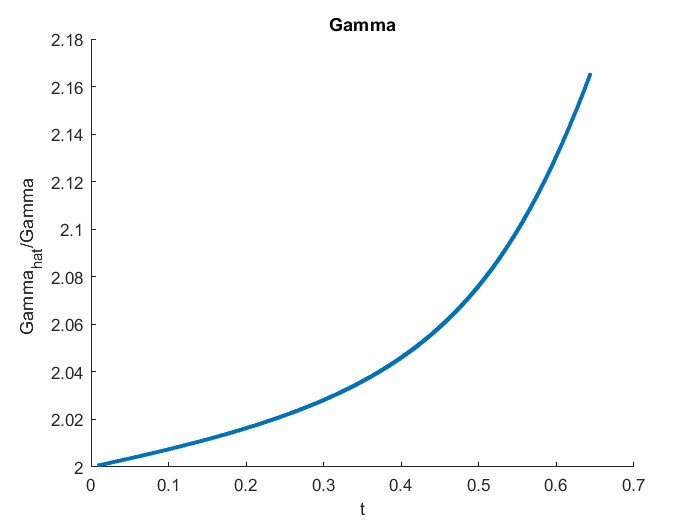}
        \caption{Plot of quotient $\hat{\gamma}(t)/\gamma(t)$ over $0<t\leq \arcsin(3/5)$.}
        \label{fig:gamma kvot}
    \end{figure}
    From this we numerically get that 
    $$\max_{0<t\leq\arcsin(3/5)}{\left(\frac{\hat{\gamma}(t)}{\gamma(t)}\right)}< 2.1652,$$ concluding our proof.
\end{proof}

\section{Conclusion and discussion}
The main results in this paper demonstrates that there exists an upper bound on the largest angle, $\gamma$,
that can occur in the orbit from LE3 of a given triangle.
For triangles defined by complex numbers in the region $\Omega_1$,
we proved analytically that any angle $\gamma'$ is bounded by the original largest angle $\gamma$ by
$$\gamma^{\prime}\leq\gamma\,\frac{\arccos\left(\frac{-5}{2\sqrt{7}}\right)}{\pi/3}.$$
Furthermore, in $\Omega_2$ we showed numerically that any angle $\gamma'$ is bounded by the original largest angle $\gamma$ by 
$$\gamma'<2.1652\gamma.$$
This ensures that an upper bound for the largest original angle in longest edge trisection exists.

\bibliography{myBib}

\end{document}